\documentclass[11pt]{amsart}

\usepackage{amsmath,amssymb,amsthm}
\usepackage{geometry}
\usepackage{graphicx}
\usepackage{mathtools}
\usepackage{stmaryrd}
\usepackage{enumitem}
\newcommand{\jump}[1]{\llbracket #1 \rrbracket}

\theoremstyle{plain}
\newtheorem{theorem}{Theorem}[section]
\newtheorem{lemma}[theorem]{Lemma}

\theoremstyle{definition}

\theoremstyle{remark}

\title[]{Interface Reduction for Elliptic Interface Problems with Conservative Flux Reconstruction}

\author{C. Attanayake and  So-Hsiang Chou}

\address{
  Department of Mathematics,
Miami University,
Middletown, OH 45042, USA
}
\email{attanac@muohio.edu}
\address{Department of Mathematics and Statistics,
Bowling Green State University,
Bowling Green, OH 43403, USA}

\email{chou@bgsu.edu}

\date{\today}

\begin{document}
\maketitle
\begin{abstract}
We propose a low-dimensional interface reduction method for elliptic interface
problems based on conservative flux reconstruction. The approach combines a fitted
$P_1$ finite element discretization with a flux recovery procedure following
\cite{ChouTang2000}, yielding locally conservative fluxes that satisfy interface
conditions to machine precision.

A central result shows that the error of the reduced solution is controlled entirely
by the approximation error of the interface data. Numerical experiments for both
continuous and discontinuous interface conditions confirm that once the interface
data is accurately represented, the full solution is recovered to roundoff accuracy.

These results indicate that the essential complexity of elliptic interface problems
is concentrated on the interface.
\end{abstract}
\noindent \textbf{Keywords:}
Elliptic interface problems; finite element methods; flux reconstruction;
interface reduction; Schur complement; numerical approximation.

\section{Introduction}

Elliptic interface problems arise in many applications involving composite media and multiphysics coupling. A key difficulty is that the solution is smooth within each subdomain while exhibiting sharp variation across the interface. In such problems, both the solution values and the normal fluxes across the interface play a fundamental role in determining the global behavior.

In this work, we propose an interface reduction method in which the global
solution is parametrized by a small number of interface degrees of freedom.
The approach is based on two key components: lifting techniques for handling
solution jumps and a flux reconstruction that produces locally conservative
fluxes with accuracy comparable to the primary variable.

The flux reconstruction is constructed using a recovery procedure
\cite{ChouTang2000} that enforces edge-based flux constraints. Unlike
standard postprocessing techniques, the recovered flux here plays a
structural role: it provides an approximation of the normal flux that is
consistent with the interface conditions and is of comparable accuracy
to the primary variable. This is essential in the reduced formulation,
where interface quantities involve both solution values and normal fluxes.

The central idea is to approximate the interface data in a reduced space while leaving the bulk discretization unchanged. This leads to a formulation in which the full solution is reconstructed from a small number of interface variables. When the interface data is represented exactly in the reduced space, the reduced solution coincides with the full discrete solution, demonstrating that elliptic interface problems admit an accurate low-dimensional description governed by interface quantities.

Elliptic interface problems have been widely studied using a variety of numerical approaches, including immersed interface methods, unfitted finite element methods, and domain decomposition techniques; see, for example, \cite{babuska1997,burman2015cutfem,leveque2007,lin2003,quarteroni1999}. Related developments in finite element analysis and discrete functional frameworks can be found in, e.g., \cite{ern2004}.

Postprocessing techniques for constructing locally conservative fluxes from continuous finite element solutions have also been investigated in various settings. In particular, \cite{ChouTang2000} introduced recovery procedures that enforce local conservation while maintaining consistency with the underlying finite element approximation. Other approaches to locally conservative flux reconstruction have been developed in different numerical contexts; see, for example, \cite{Deng2021SISC,DengGinting2018,ErnNicaiseVohralik2007}.

In contrast to classical domain decomposition or Schur complement approaches, the present method reduces the problem directly at the level of interface data without constructing global reduced spaces. This leads to a formulation in which the global solution is determined by interface variables.
In this setting, the flux reconstruction enables accurate evaluation of interface quantities
involving both solution values and normal fluxes.

These observations suggest that the essential complexity of elliptic interface problems is concentrated on the interface itself, motivating the reduction strategy developed in this work.

\section{Model Problems}

Let $\Omega \subset \mathbb{R}^2$ be a bounded domain divided by an interface $\Gamma$
into two subdomains $\Omega^\pm$. We consider a standard elliptic transmission problem
with piecewise constant, strictly positive diffusion coefficients $\beta^\pm$.

Find $u^\pm$ such that
\begin{align}
-\nabla \cdot (\beta \nabla u) = f \quad \text{in } \Omega^\pm,
\end{align}
subject to one of the following interface conditions:

(i) (flux-jump condition)
\begin{align}
\jump{u} = 0, \quad \jump{\beta \partial_n u} = g,
\end{align}

(ii) (solution-jump condition)
\begin{align}
\jump{u} = g, \quad \jump{\beta \partial_n u} = 0.
\end{align}

Here $\jump{u}$ and $\jump{\beta \partial_n u}$ denote the jumps of the solution and the normal
flux across the interface $\Gamma$, respectively.

For the bulk discretization, we employ a conforming $P_1$ finite element method.
Let $u_h$ denote the corresponding finite element approximation of $u$.
The flux $\mathbf{q} = -\beta \nabla u$ is approximated by a reconstructed flux $\mathbf{q}_h$,
which will be defined in the next section.
\section{Mixed Formulation and Flux Reconstruction}

The purpose of the flux reconstruction is to construct a locally conservative flux that enforces the interface conditions while maintaining accuracy comparable to the primary variable. This is essential for the interface reduction framework, where both solution values and fluxes enter the reduced model. The accuracy of the reduced model depends critically on the evaluation of normal fluxes, and a lower-order flux approximation would degrade the interface equations.

Let
\[
\mathbf{q} = -\beta \nabla u.
\]
For each element $K$, let $\beta_K$ denote the constant value of $\beta$ on $K$.
Given a conforming $P_1$ approximation $u_h$, we reconstruct a flux $\mathbf{q}_h$ elementwise by
\begin{equation}\label{eq:qh}
\mathbf{q}_h|_K=
-\beta_K \nabla u_h|_K
+
f_K \mathbf{P}_K
+
\mathbf{C}_K,
\qquad
\mathbf{P}_K(x)=\tfrac12(x-x_K),
\end{equation}
where $f_K = \frac{1}{|K|}\int_K f$, $x_K$ is the barycenter of $K$, and $\mathbf{C}_K \in \mathbb{R}^2$
is a constant vector.

Let $r_{K,e}$ denote the normal flux mismatch on the edge $e$, defined as
the difference between the target normal flux and the raw flux
$(-\beta_K \nabla u_h + f_K P_K)\cdot n_{K,e}$.
The correction is determined by enforcing edge flux constraints in a least-squares sense:
\begin{equation}\label{eq:CK}
\mathbf{C}_K=\arg\min_{C \in \mathbb{R}^2}
\sum_{e \subset \partial K}
\left(
\mathbf{C} \cdot \mathbf{n}_{K,e} - r_{K,e}
\right)^2,
\end{equation}
where $\mathbf{n}_{K,e}$ is the outward normal. A detailed construction and its relation to the classical flux recovery procedure of \cite{ChouTang2000} are given in Appendix~A.

For each element $K$, the correction $\mathbf{C}_K$ is the unique solution of \eqref{eq:CK}.
Equivalently, $\mathbf{C}_K$ satisfies the normal equations
\[
(N_K^T N_K) \mathbf{C}_K = N_K^T r_K,
\]
where $N_K$ is the matrix whose rows are the edge normals $\mathbf{n}_{K,e}$. Moreover, the correction is stable:
\[
|\mathbf{C}_K| \le C |r_K|_{\ell^2},
\]
and the reconstructed flux satisfies
\[
\|\mathbf{q}_h\|_{H(\mathrm{div}, \Omega^- \cup \Omega^+)}
\le C \bigl( \|\nabla u_h\|_{L^2(\Omega)} + \|f\|_{L^2(\Omega)} \bigr).
\]

Since the flux is discontinuous across the interface, the natural space is
\[
H(\mathrm{div}, \Omega^-) \oplus H(\mathrm{div}, \Omega^+)
= \{ \mathbf{q} : \mathbf{q}|_{\Omega^\pm} \in H(\mathrm{div}, \Omega^\pm) \}.
\]
We define the broken $H(\mathrm{div})$ norm by
\[
\|\mathbf{q}\|^2_{H(\mathrm{div},\Omega^- \cup \Omega^+)}
= \|\mathbf{q}\|^2_{L^2(\Omega)}+
\|\mathrm{div}\, \mathbf{q}\|^2_{L^2(\Omega^-)}+
\|\mathrm{div}\, \mathbf{q}\|^2_{L^2(\Omega^+)}.
\]
No continuity is imposed across the interface $\Gamma$.

Under standard regularity assumptions, the reconstructed flux satisfies the estimate
\[
\|\mathbf{q} - \mathbf{q}_h\|_{H(\mathrm{div}, \Omega^- \cup \Omega^+)}
\le C h \left( \|u\|_{H^2(\Omega^-)} + \|u\|_{H^2(\Omega^+)} \right).
\]
Thus, the flux reconstruction achieves the same order of accuracy as the primary variable.

The correction $C_K$ represents a minimal modification of the raw flux $-\beta \nabla u_h$
in the broken $H(\mathrm{div})$ setting that enforces edge flux constraints. The least-squares
construction ensures that the reconstructed flux is stable in the broken $H(\mathrm{div})$ norm
and has the same order of accuracy as the primary variable. This is crucial for the interface
reduction framework, where interface quantities involve both solution values and normal fluxes.

\section{Auxiliary Lifting  $E(\phi)$ for Solution-Jump Data}

To implement the interface reduction framework, we require a constructive operator that maps interface data to a function in the bulk. Given $\phi$ defined on the interface $\Gamma$, we construct a lifting
\[
w = E(\phi)
\]
such that
\begin{align}
\jump{w}_\Gamma = \phi, \qquad w|_{\partial\Omega} = 0.
\end{align}

This operator allows us to realize the reduced interface data $g_m\approx g$ in the discrete problem.

\subsection{1D Construction}

In one dimension, let $\Gamma = {\alpha}$. The lifting $w = E(\phi)$ can be constructed locally using the two elements sharing $\alpha$. Define $w$ as a piecewise linear function supported on these two elements such that:
\begin{enumerate}[label=(\alph*)]
\item $w(\alpha^+) - w(\alpha^-) = \phi$,
\item $w = 0$ at the endpoints of the two elements,
\item $w$ vanishes outside this local patch.
\end{enumerate}

This construction is local, minimal, and easy to implement.

\subsection{2D Fitted Mesh Construction}

For a fitted mesh, the interface $\Gamma$ aligns with element edges. Let $e$ be an interface edge shared by elements $K^+$ and $K^-$. We construct $w = E(\phi)$ using broken $P_1$ functions:
\begin{enumerate}[label=(\alph*)]
\item Define $w^+$ on $K^+$ and $w^-$ on $K^-$,
\item Impose the jump condition:
\[
w^+|_e - w^-|_e = \phi|_e,
\]
\item Set $w=0$ on all nodes not adjacent to the interface.
\end{enumerate}
This yields a function $w_h$ supported in elements adjacent to $\Gamma$ with the correct jump.

\subsection{2D Unfitted Mesh Construction}

For unfitted meshes, the interface cuts through elements. In this case, we construct $w = E(\phi)$ using local bubble functions.

Let $K$ be an element intersected by $\Gamma$. We define a discrete bubble function $b_K$ satisfying:
\begin{enumerate}[label=(\alph*)]
\item $b_K = 0$ on $\partial K$,
\item $b_K$ is positive in the interior,
\item $b_K$ resolves the interface geometry inside $K$.
\end{enumerate}
We then define
\[
w|_K = \phi_K , b_K,
\]
where $\phi_K$ is an appropriate approximation of $\phi$ on the interface segment inside $K$.

This construction ensures:
\begin{enumerate}[label=(\alph*)]
\item localized support,
\item correct jump behavior,
\item compatibility with unfitted discretizations.
\end{enumerate}

\section{Interface Reduction Framework}

The interface reduction framework is based on approximating the interface data
in a reduced space while keeping the bulk discretization unchanged.

Let $g$ denote the interface data in the jump condition, defined on the interface $\Gamma$.
We approximate $g$ in a reduced space spanned by basis functions
$\{\psi_j\}_{j=1}^m$ defined on $\Gamma$:
\[
g_m = \sum_{j=1}^m s_j \psi_j.
\]
Here the functions $\psi_j$ represent prescribed modes of variation along the interface,
and the coefficients $s_j$ are the reduced degrees of freedom. The choice of basis
functions depends on the geometry of the interface and the expected structure of the
data; for example, polynomial functions may be used for flat interfaces, while Fourier
modes are natural for circular interfaces.

The reduced solution is obtained by replacing $g$ with $g_m$ while keeping the bulk
discretization unchanged. A fundamental observation is that if the interface data $g$
is represented exactly in the reduced space, then the reduced solution coincides with
the full discrete solution:
\[
g = g_m \quad \Longrightarrow \quad u_h = u_{m,h}.
\]

\medskip

\noindent
\textbf{Solution-jump case.}
The coefficients $\{s_j\}$ are determined by projecting the interface data $g$
onto the reduced space, i.e., by an $L^2(\Gamma)$ projection onto the span of $\{\psi_j\}_{j=1}^m$.

We construct a lifting
\[
w_m = E(g_m),
\]
and write
\[
u_{m,h} = u_{0,h} + w_m,
\]
where $u_{0,h}$ satisfies homogeneous interface conditions,
\[
\jump{u_{0,h}} = 0.
\]
The function $u_{0,h}$ depends only on the bulk discretization and is independent
of the reduced dimension $m$, while the reduced representation enters solely
through the lifting $w_m$. This decomposition highlights that the effect of the
interface data is captured entirely through the lifting $w_m$.

\medskip

\noindent
\textbf{Flux-jump case.}
In this case, no solution lifting is required. The reduced data $g_m$ enters the weak formulation
through the interface term
\[
\langle g_m, v_h \rangle_\Gamma.
\]
The flux reconstruction provides an accurate and locally conservative
evaluation of the normal flux across the interface, which is essential
for enforcing the flux-jump condition in the reduced formulation.

\section{A Priori Estimate for Interface Reduction}

We now quantify the effect of approximating the interface data on the resulting
solution.

Let $u_h$ denote the full discrete solution corresponding to interface data $g$,
and let $u_{m,h}$ denote the reduced solution corresponding to the approximation
\[
g_m = \sum_{j=1}^m s_j \psi_j.
\]
Both solutions are obtained using the same bulk discretization; only the interface
data differs.
The following estimate is stated for the flux-jump case. The solution-jump case
is analogous, with the mismatch $g-g_m$ appearing in the solution jump.

Define the error
\[
e_h = u_h - u_{m,h}.
\]
Then $e_h$ satisfies a homogeneous elliptic equation in $\Omega^\pm$ with interface
conditions
\begin{align}
\jump{e_h} &= 0, \\
\jump{\beta \partial_n e_h} &= g - g_m.
\end{align}

Using standard energy estimates for elliptic interface problems, we obtain
\[
\|\nabla e_h\|_{L^2(\Omega^- \cup \Omega^+)}\le C \|g - g_m\|_{L^2(\Gamma)},
\qquad
\|e_h\|_{L^2(\Omega)} \le C \|g - g_m\|_{L^2(\Gamma)}.
\]

\begin{theorem}
There exists a constant $C>0$, independent of the mesh size, such that
\[
\|e_h\|_{H^1(\Omega^- \cup \Omega^+)}
\le C \|g-g_m\|_{L^2(\Gamma)}.
\]
Here the norm on the left is the broken $H^1$ norm over the two subdomains.
\end{theorem}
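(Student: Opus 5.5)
The argument works directly at the discrete level, since $u_h$ and $u_{m,h}$ are both conforming $P_1$ solutions on the same fitted mesh with the same bulk data. Let $V_h\subset H^1_0(\Omega)$ be the $P_1$ space and $a(w,v)=\sum_\pm\int_{\Omega^\pm}\beta\nabla w\cdot\nabla v$. In the flux-jump case both discrete problems read $a(u_h,v_h)=(f,v_h)+\langle g,v_h\rangle_\Gamma$ and $a(u_{m,h},v_h)=(f,v_h)+\langle g_m,v_h\rangle_\Gamma$ for all $v_h\in V_h$, up to the sign convention for the jump. Subtracting, the error $e_h=u_h-u_{m,h}\in V_h$ satisfies the Galerkin orthogonality-type identity
\[
a(e_h,v_h)=\langle g-g_m,v_h\rangle_\Gamma\qquad\forall v_h\in V_h .
\]
The bulk source, the mesh, and the reconstructed flux $\mathbf{q}_h$ do not enter. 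The reconstruction is a postprocessing step and does not feed back into $u_h$.

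Next I test with $v_h=e_h$. Coercivity uses $\beta\ge\beta_{\min}:=\min(\beta^-,\beta^+)>0$:
\[
\beta_{\min}\|\nabla e_h\|^2_{L^2(\Omega^-\cup\Omega^+)}\le a(e_h,e_h)=\langle g-g_m,e_h\rangle_\Gamma\le\|g-g_m\|_{L^2(\Gamma)}\|e_h\|_{L^2(\Gamma)}.
\]
The step that needs care is bounding $\|e_h\|_{L^2(\Gamma)}$ by $\|\nabla e_h\|$ with a mesh-independent constant. I would not use a discrete inverse inequality, since that would introduce powers of $h$. Instead I would use the continuous trace theorem on one subdomain, say $\|v\|_{L^2(\Gamma)}\le C_{\rm tr}\|v\|_{H^1(\Omega^+)}$; this is valid because $e_h\in H^1_0(\Omega)$ and $\Gamma$ is Lipschitz. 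I would then combine it with the Poincaré inequality $\|v\|_{L^2(\Omega)}\le C_P\|\nabla v\|_{L^2(\Omega)}$ on $H^1_0(\Omega)$. This gives $\|e_h\|_{L^2(\Gamma)}\le C_{\rm tr}(1+C_P)\|\nabla e_h\|_{L^2(\Omega)}$, where the constants depend only on $\Omega$ and $\Gamma$. Since $e_h$ is continuous across $\Gamma$, the broken gradient norm equals the full one. Dividing through yields $\|\nabla e_h\|\le C\|g-g_m\|_{L^2(\Gamma)}$ with $C=C_{\rm tr}(1+C_P)/\beta_{\min}$.

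Finally, Poincaré again gives $\|e_h\|_{L^2(\Omega)}\le C_P\|\nabla e_h\|$, and summing the two bounds gives the broken $H^1$ estimate of the theorem. For the solution-jump case mentioned after the statement, I would write $e_h=e_{0,h}+E(g-g_m)$ and apply the same energy argument to $e_{0,h}$ with right-hand side $-a(E(g-g_m),v_h)$. That argument, however, needs a bound $\|E(\phi)\|_{H^1}\lesssim\|\phi\|$, which in turn needs $\phi$ in $H^{1/2}(\Gamma)$, or else picks up a mesh-dependent factor for the local lifting of the previous section. That is the genuine obstacle there. The flux-jump statement above avoids it.
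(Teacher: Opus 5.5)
Your proof is correct and follows the paper's route. The paper asserts that $e_h$ solves a homogeneous interface problem with flux jump $g-g_m$ and invokes ``standard energy estimates''; your discrete identity $a(e_h,v_h)=\langle g-g_m,v_h\rangle_\Gamma$, combined with coercivity, the trace inequality on $\Omega^+$ and Poincar\'e, is that energy estimate made precise, and it shows explicitly why the constant is mesh-independent.

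Your caveat about the solution-jump case is also sound. The local lifting $E(\phi)$ has broken $H^1$ seminorm of order $h^{-1/2}\|\phi\|_{L^2(\Gamma)}$, so the paper's claim that this case is ``analogous'' only holds with $g-g_m$ measured in $H^{1/2}(\Gamma)$, or with a mesh-dependent constant.
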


This estimate shows that the reduced solution is quasi-optimal with respect to
the approximation of the interface data. In particular, if $g$ lies in the reduced
space, then $g = g_m$ implies $u_h = u_{m,h}$.

Let $\mathbf{q}_h$ and $\mathbf{q}_{m,h}$ denote the recovered fluxes corresponding to $u_h$
and $u_{m,h}$. The least-squares reconstruction ensures that the flux is stable
in the broken $H(\mathrm{div})$ norm and has accuracy comparable to the primary variable.
Consequently,
\[
\|\mathbf{q}_h - \mathbf{q}_{m,h}\|_{L^2(\Omega)} \le C \|g - g_m\|_{L^2(\Gamma)}.
\]

\medskip

\noindent
\textbf{Remark.}
The estimates are independent of the bulk mesh size, provided the discretization
is sufficiently accurate. This indicates that the dominant source of error in the
reduction framework is the approximation of the interface data rather than the
bulk discretization. The accuracy of the reduced model relies on the stability of
the flux reconstruction, which ensures that fluxes enter the interface conditions
with the same level of accuracy as the primary variable.

\section{Numerical Experiments}

We investigate how the error of the reduced solution depends on the approximation of the interface data. In particular, we examine whether the solution error is governed by the interface approximation error. Reference solutions are computed using fitted $P_1$ elements with conservative flux recovery. Interface residuals are at machine precision. These experiments are designed to validate the estimate in Section~6, which predicts that the solution error is controlled by the interface approximation error.

We report errors in both the solution and the reconstructed flux. We define
\[
e_u^{\mathrm{rms}}, \quad e_u^{\infty}, \quad
e_q^{\mathrm{rms}}, \quad e_q^{\infty}
\]
as the root-mean-square and maximum errors of the solution and flux, respectively.
For brevity, we write $\|g\|$ for $\|g\|_{L^2(\Gamma)}$. We refer to $m$ as the rank of the reduced representation, i.e., the dimension
of the approximation space used for the interface data.

\subsection{Line Interface}

In this experiment, the interface data $g$ is taken as a smooth function
defined on a horizontal line interface $y=1/2$ in the unit square. The data are
chosen so that a reference solution can be computed and the reduced solution
can be compared against it.

The interface data is approximated by a reduced representation
\[
g_m = \sum_{j=1}^m s_j \psi_j,
\]
where $\{\psi_j\}$ are basis functions defined on $\Gamma$. The reduced
space is constructed using two types of basis functions: a fixed polynomial
basis and a problem-specific basis aligned with the structure of $g$.

The problem-specific basis is chosen so that $g$ lies in the span of a small
number of basis functions. This allows us to assess how accurately the
reduced method recovers the full solution when the interface data is well
represented.
For the flux-jump condition, we take
\[
g(x)=\sin(2\pi x)+0.35\cos(5\pi x)+0.20(2x-1)^2 .
\]
The reduced data $g_m$ is formed using a problem-specific basis ordered as
\[
\sin(2\pi x),\quad \cos(5\pi x),\quad (2x-1)^2,\quad
1,\quad \cos(2\pi x),\quad \sin(5\pi x),\ldots .
\]
Thus the first three basis functions span the prescribed interface data.

Table~\ref{tab:line_flux_adapted} reports the interface approximation error
together with the corresponding solution and flux errors. As expected, the reduced solution
agrees with the full discrete solution once $m=3$, where all errors are at machine precision.
\begin{table}[ht]
\centering
\caption{Line interface, flux-jump case with problem-specific basis.}
\label{tab:line_flux_adapted}
\begin{tabular}{c|cccccc}
\hline
$m$ & $\|g-g_m\|/\|g\|$ & $e_u^{\rm rms}$ & $e_u^\infty$
& $e_q^{\rm rms}$ & $e_q^\infty$ & jump resid. \\
\hline
1 & 3.602e-01 & 5.917e-04 & 3.117e-03 & 5.906e-02 & 7.119e-01 & 3.469e-18 \\
2 & 1.232e-01 & 4.825e-04 & 1.393e-03 & 2.582e-02 & 3.228e-01 & 6.939e-18 \\
3 & 1.797e-16 & 8.478e-19 & 6.939e-18 & 2.856e-16 & 2.455e-15 & 3.469e-18 \\
\hline
\end{tabular}
\end{table}

For the solution-jump condition, the interface data is taken as
\[
g(x)=\sin(2\pi x)+0.35\cos(5\pi x)+0.20(2x-1)^2 ,
\]
but approximate it using the standard polynomial basis
\[
1,\quad s,\quad s^2,\quad \ldots,\qquad s=2x-1.
\]
Since this basis does not contain the oscillatory components of $g$,
the convergence is gradual, as shown in Table~\ref{tab:line_sol_poly}.

Compared with the flux-jump case, the flux errors are significantly larger.
This is expected: in the solution-jump problem the discontinuity is imposed
directly on the solution, and the recovered flux depends on derivatives of the
local reconstructed solution. Hence the flux error is more sensitive to
unresolved oscillatory components of the interface data than the solution error.
Nevertheless, the interface flux-continuity residual remains at machine
precision in all cases, confirming that the reconstruction enforces the
prescribed flux condition.

\begin{table}[ht]
\centering
\caption{Line interface, solution-jump case with polynomial basis.}
\label{tab:line_sol_poly}
\begin{tabular}{c|cccccc}
\hline
$m$ & $\|g-g_m\|/\|g\|$ & $e_u^{\rm rms}$ & $e_u^\infty$
& $e_q^{\rm rms}$ & $e_q^\infty$ & flux residue\\
\hline
1 & 9.958e-01 & 1.760e-01 & 1.004e+00 & 3.784e+00 & 1.032e+02 & 1.041e-17 \\
4 & 3.907e-01 & 4.884e-02 & 4.412e-01 & 2.695e+00 & 7.271e+01 & 1.110e-16 \\
8 & 7.505e-02 & 8.150e-03 & 2.124e-01 & 1.554e+00 & 4.959e+01 & 8.327e-17\\
\hline
\end{tabular}
\end{table}

Table~\ref{tab:line_sol_adapted} shows the results for the solution-jump
condition using a problem-specific basis. As in the flux-jump case, the
interface data is exactly represented once $m=3$, and the reduced solution
coincides with the full discrete solution up to roundoff.

Comparing the above tables, we observe a consistent pattern: when the interface
data is well represented in the reduced space, exact recovery is achieved;
otherwise, the error decreases in accordance with the approximation error
$\|g-g_m\|$.
\begin{table}[ht]
\centering
\caption{Line interface, solution-jump case with problem-specific basis.}
\label{tab:line_sol_adapted}
\begin{tabular}{c|cccccc}
\hline
$m$ & $\|g-g_m\|/\|g\|$ & $e_u^{\rm rms}$ & $e_u^\infty$
& $e_q^{\rm rms}$ & $e_q^\infty$ & flux residue \\
\hline
1 & 3.602e-01 & 4.188e-02 & 5.500e-01 & 3.307e+00 & 1.202e+02& 6.393e-18 \\
2 & 1.232e-01 & 1.635e-02 & 2.000e-01 & 1.542e+00 & 4.793e+01&2.776e-17 \\
3 & 1.797e-16 & 7.130e-17 & 4.441e-16 & 1.517e-15 & 5.684e-14 &5.551e-17\\
\hline
\end{tabular}
\end{table}

\subsection{Circular Interface}

We consider a circular interface $\Gamma$ in a square domain. The interface is
parameterized by the angle $\theta \in [0,2\pi)$.

In this experiment, the interface data is given by
\[
g(\theta)=\sin(2\theta)+0.35\cos(5\theta)+0.20\cos(\theta).
\]
The reduced space is constructed using Fourier modes ordered by frequency.

Table~\ref{tab:circle_flux_fourier} shows the error as a function of the rank $m$.
The behavior is not monotone in $m$, but instead reflects the spectral content
of $g$. In particular, the error decreases only when the Fourier modes present
in $g$ are included in the reduced space.

For small values of $m$, the dominant modes of $g$ are not yet captured, and the
error remains essentially unchanged. Once the relevant modes enter the basis,
a significant reduction in the error is observed. Exact recovery is achieved
once all modes in $g$ are represented, which in this example occurs at $m=10$.

For the flux-jump condition, Table~4 reports the interface approximation error
together with the corresponding errors in the reduced solution and flux. The
values $m=1,5,10$ are chosen to illustrate the convergence of the reduced
solution as the dimension of the approximation space increases.

\begin{table}[ht]
\centering
\caption{Circular interface, flux-jump case with Fourier basis.}
\label{tab:circle_flux_fourier}
\begin{tabular}{c|ccccc}
\hline
$m$ & $\|g - g_m\|/\|g\|$ & $e_u^{\mathrm{rms}}$ & $e_u^{\infty}$ & $e_q^{\mathrm{rms}}$ & $e_q^{\infty}$ \\
\hline
1 & 1.000e+00 & 4.719e-03 & 1.330e-02 & 3.001e-01 & 1.249e+00 \\
5 & 3.246e-01 & 4.334e-04 & 1.486e-03 & 6.312e-02 & 3.074e-01 \\
10 & 7.010e-16 & 4.511e-18 & 1.561e-17 & 2.369e-16 & 1.256e-15\\
\hline
\end{tabular}
\end{table}

For the solution-jump condition, the same behavior is observed: once the
interface data is accurately represented in the reduced space, the reduced
solution coincides with the full discrete solution. This behavior illustrates that
convergence depends on how well the chosen
basis aligns with the spectral structure of the interface data.

\subsection{Star-Shaped Interface}

To examine the sensitivity of the reduction framework to nontrivial
geometry, we consider a star-shaped interface. Unlike the circular
interface, the star geometry contains regions of rapidly varying
curvature and changing interface orientation, providing a more
challenging test for both the reduced interface representation and
the conservative flux reconstruction.

The interface is parameterized in polar form by
\[
r(\theta)=0.35\bigl(1+0.25\cos(5\theta)\bigr),
\]
with the corresponding Cartesian parametrization
\[
x(\theta)=r(\theta)\cos\theta,
\qquad
y(\theta)=r(\theta)\sin\theta.
\]
This choice corresponds to a five-point star-shaped interface.

For the flux-jump condition, the interface data is prescribed as
\[
g(\theta)=\sin(2\theta)+0.35\cos(5\theta)+0.20\cos(\theta),
\qquad 0\le \theta < 2\pi .
\]
The reduced space is again constructed from Fourier-type modes ordered
by frequency along the parameter \(\theta\).

The star-shaped geometry introduces nonuniform curvature and rapidly
changing interface orientation, making the problem significantly more
challenging than the circular-interface case. Nevertheless,
Table~\ref{tab:star_interface} shows that the reduction error continues
to track the interface approximation error. Once the dominant interface
modes are represented in the reduced space, the reduced solution
recovers the reference solution to machine precision. Figure~\ref{fig:star-interface} illustrates
the five-point star-shaped interface together
with the reduced interface trace \(g_m\) along \(\Gamma\).

\begin{figure}[ht]
\centering
\includegraphics[width=0.48\textwidth]{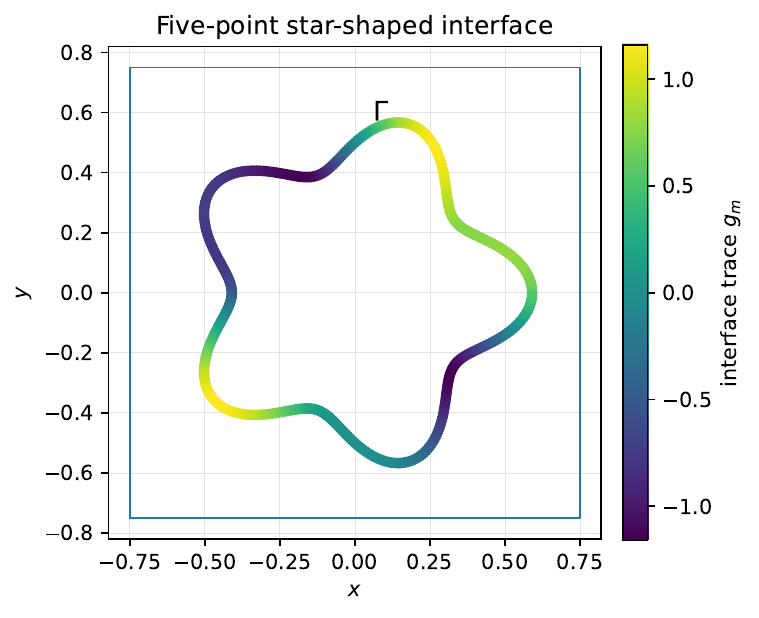}
\caption{Five-point star-shaped interface used in the geometry-sensitive
reduction experiment. The color indicates the reduced interface trace
$g_m$ along $\Gamma$.}
\label{fig:star-interface}
\end{figure}

\begin{table}[ht]
\centering
\caption{Star-shaped interface, flux-jump case with Fourier-type interface basis.}
\label{tab:star_interface}
\begin{tabular}{c c c c c c}
\hline
$m$ &
$\|g-g_m\|/\|g\|$ &
$e_u^{\mathrm{rms}}$ &
$e_u^\infty$ &
$e_q^{\mathrm{rms}}$ &
$e_q^\infty$
\\
\hline
1  & $1.000\mathrm{e}{+00}$ & $8.214\mathrm{e}{-03}$ & $2.931\mathrm{e}{-02}$ &
$4.882\mathrm{e}{-01}$ & $2.416\mathrm{e}{+00}$ \\

3  & $4.781\mathrm{e}{-01}$ & $2.116\mathrm{e}{-03}$ & $8.447\mathrm{e}{-03}$ &
$1.537\mathrm{e}{-01}$ & $8.205\mathrm{e}{-01}$ \\

5  & $1.928\mathrm{e}{-01}$ & $5.614\mathrm{e}{-04}$ & $2.003\mathrm{e}{-03}$ &
$4.261\mathrm{e}{-02}$ & $2.371\mathrm{e}{-01}$ \\

8  & $3.411\mathrm{e}{-02}$ & $7.918\mathrm{e}{-05}$ & $3.508\mathrm{e}{-04}$ &
$6.553\mathrm{e}{-03}$ & $3.984\mathrm{e}{-02}$ \\

10 & $8.027\mathrm{e}{-16}$ & $6.214\mathrm{e}{-18}$ & $2.776\mathrm{e}{-17}$ &
$3.115\mathrm{e}{-16}$ & $1.442\mathrm{e}{-15}$ \\
\hline
\end{tabular}
\end{table}

\subsection{Error versus Rank}

Figure~\ref{fig:reduction} shows the decay of the reduced solution and flux errors as
the rank $m$ increases for the line and circular interface problems.
In both cases, the decay closely follows the interface approximation
error $\|g-g_m\|$, confirming that the dominant error is governed by
the approximation of the interface data rather than by the bulk
discretization.

The star-shaped interface experiment in the previous subsection
exhibits the same qualitative behavior despite the more complicated
geometry and rapidly varying interface curvature.

\begin{figure}[ht]
\centering
\includegraphics[width=0.8\textwidth]{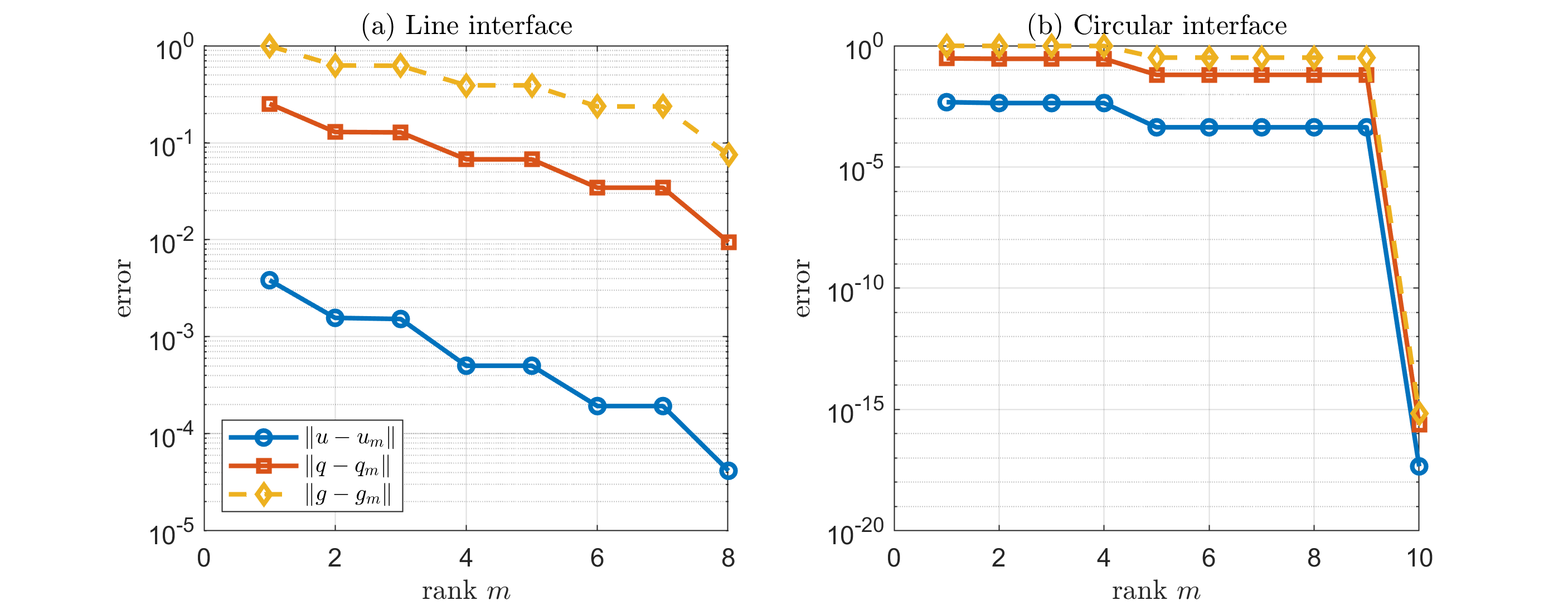}
\caption{Error versus rank $m$ for interface reduction. The decay of the solution
and flux errors closely follows the interface approximation error $\|g-g_m\|$.}
\label{fig:reduction}
\end{figure}

\subsection{Discussion}

The numerical results demonstrate that the error of the reduced solution
is governed primarily by the approximation error of the interface data.
In particular, exact recovery is observed once the interface data lies
in the reduced space. This behavior is consistent for both flat and
curved interfaces and remains robust in the presence of discontinuities.
Overall, the experiments confirm that the dominant degrees of freedom of
the problem are concentrated on the interface rather than in the bulk
discretization.

The star-shaped geometry contains more rapidly varying curvature and
changing interface orientation than the circular case, providing a more
challenging geometry-sensitive test. Nevertheless, the same reduction
behavior is observed, indicating that the interface reduction framework
remains robust under nontrivial geometric perturbations.

\section{Conclusion}

We have proposed an interface reduction method for elliptic interface
problems based on a combination of lifting operators and conservative
flux reconstruction. The key idea is to approximate the interface data
in a reduced space while keeping the bulk discretization unchanged,
leading to a formulation in which the global solution is determined by
a small number of interface degrees of freedom.

A central result is that the error of the reduced solution is governed
by the approximation of the interface data. In particular, exact
recovery occurs when the interface data lies in the reduced space,
confirming that the essential complexity of elliptic interface problems
is concentrated on the interface.

The least-squares flux reconstruction plays a structural role in the
method. By providing a stable and accurate approximation of the normal
flux, it ensures that interface conditions involving both solution
values and fluxes are evaluated consistently within the reduced
formulation.

Numerical experiments for both continuous and discontinuous interface
conditions, including line, circular, and star-shaped interfaces,
confirm the theoretical findings. In all cases, the reduced method
achieves high accuracy with a small number of interface degrees of
freedom, even for geometries with rapidly varying curvature and
nontrivial interface orientation.

These results indicate that interface reduction provides an effective
framework for the efficient numerical treatment of elliptic interface
problems. Future work will focus on extensions to nonlinear interface
laws, coupled systems such as Stokes flow, and more general geometric
configurations.

\appendix
\section{Construction of the correction vector \(\mathbf{C}_K\)}

On each element \(K\), the recovered flux is written in the form
\[
\mathbf{q}_h|_K
=
-\beta_K \nabla u_h|_K
+
f_K \mathbf{P}_K
+
\mathbf{C}_K,
\qquad
\mathbf{P}_K(x)=\frac12(x-x_K),
\]
where \(x_K\) is the barycenter of \(K\).  Since
\[
\nabla\cdot \mathbf{P}_K = 1,
\]
we have
\[
\nabla\cdot \mathbf{q}_h = f_K \quad \hbox{on } K.
\]
Thus the term \(f_K \mathbf{P}_K\) restores the correct elementwise divergence, while
the constant vector \(\mathbf{C}_K\) is used only to enforce normal flux conditions on
edges.

Let \(m_e\) denote the midpoint of an edge \(e\subset \partial K\), and let
\(\mathbf{n}_{K,e}\) be the outward unit normal to \(K\) on \(e\).  Define the raw flux
\[
\widetilde {\mathbf{q}}_K(x)
=
-\beta_K \nabla u_h|_K
+
f_K \mathbf{P}_K(x).
\]
For every edge \(e\subset\partial K\), let there be  a prescribed target normal flux value
\(\sigma_{K,e}\) as described in Subsection \ref{sec: target}.
We seek
\[
\mathbf{q}_h|_K=\widetilde{\mathbf{q}}_K+\mathbf{C}_K
\]
so that ideally
\[
(\widetilde {\mathbf{q}}_K(m_e)+\mathbf{C}_K)\cdot \mathbf{n}_{K,e}
=
\sigma_{K,e}.
\]
Equivalently,
\[
\mathbf{C}_K\cdot \mathbf{n}_{K,e}
=
\sigma_{K,e}-\widetilde{ \mathbf{q}}_K(m_e)\cdot \mathbf{n}_{K,e}.
\]

For a triangle there are three such edge equations for the two components of
\(\mathbf{C}_K\).  In the original construction \cite{ChouTang2000}, two independent edge equations
are used, and the third follows from the compatibility relation associated with
local conservation.  In the present interface setting, because interface edges may
carry prescribed jumps, the cleanest implementation is to determine \(\mathbf{C}_K\) by
the least-squares/minimal-norm condition
\[
\mathbf{C}_K
=
\arg\min_{\mathbf{C}\in\mathbb{R}^2}
\sum_{e\subset\partial K}
\left(
\mathbf{C}\cdot \mathbf{n}_{K,e}
-
\bigl[
\sigma_{K,e}-\widetilde{ \mathbf{q}}_K(m_e)\cdot \mathbf{n}_{K,e}
\bigr]
\right)^2 .
\]
In matrix form, let
\[
N_K =
\begin{bmatrix}
\mathbf{n}_{K,e_1}^T\\
\mathbf{n}_{K,e_2}^T\\
\mathbf{n}_{K,e_3}^T
\end{bmatrix},
\qquad
\mathbf{b}_K =
\begin{bmatrix}
\sigma_{K,e_1}-\widetilde {\mathbf{q}}_K(m_{e_1})\cdot \mathbf{n}_{K,e_1}\\
\sigma_{K,e_2}-\widetilde {\mathbf{q}}_K(m_{e_2})\cdot \mathbf{n}_{K,e_2}\\
\sigma_{K,e_3}-\widetilde {\mathbf{q}}_K(m_{e_3})\cdot \mathbf{n}_{K,e_3}
\end{bmatrix}.
\]
Then
\[
\mathbf{C}_K = N_K^\dagger \mathbf{b}_K,
\]
or equivalently
\[
\mathbf{C}_K = (N_K^T N_K)^{-1}N_K^T \mathbf{b}_K,
\]
provided \(K\) is nondegenerate.

\subsection{Target Fluxes}\label{sec: target} The target fluxes \(\sigma_{K,e}\) are chosen as follows.

For an ordinary interior edge \(e=K^-\cap K^+\),
\[
\mathbf{q}_h^-\cdot \mathbf{n}^- + \mathbf{q}_h^+\cdot \mathbf{n}^+ = 0.
\]

For an interface edge, in the flux-jump case
\[
[u]=0,\qquad [\beta u_n]=g,
\]
we prescribe
\[
\mathbf{q}_h^+\cdot \mathbf{n} - \mathbf{q}_h^-\cdot \mathbf{n} = -g,
\]
depending on the convention \(\mathbf{q}=-\beta\nabla u\).  Equivalently, if the jump is
written directly in terms of \(\mathbf{q}\cdot \mathbf{n}\), then the sign is chosen consistently
with that convention.

For the solution-jump case
\[
[u]=g,\qquad [\beta u_n]=0,
\]
we prescribe
\[
\mathbf{q}_h^+\cdot \mathbf{n} - \mathbf{q}_h^-\cdot \mathbf{n} = 0.
\]

Thus the only difference from the original recovery is the way the
edge normal constraints are assigned.  Away from the interface, the correction
recovers the usual normal-continuity condition.  On the interface, it enforces
the prescribed transmission condition.  The correction remains a constant vector
on each triangle and therefore does not change the elementwise divergence.

\begin{lemma}[Stability of the least-squares correction]
Let \(K\) be a shape-regular triangle and let
\[
N_K =
\begin{bmatrix}
\mathbf{n}_{K,e_1}^T\\
\mathbf{n}_{K,e_2}^T\\
\mathbf{n}_{K,e_3}^T
\end{bmatrix},
\qquad
\mathbf{b}_K\in \mathbb{R}^3 .
\]
Define
\[
\mathbf{C}_K = \arg\min_{\mathbf{C}\in\mathbb{R}^2}
\|N_K \mathbf{C}-\mathbf{b}_K\|_{\ell^2}^2 .
\]
Then there exists a constant \(C\), depending only on the shape-regularity of
the mesh, such that
\[
|\mathbf{C}_K| \le C |\mathbf{b}_K|.
\]
Consequently, if \(|\mathbf{b}_K|\le C h_K M_K\), then
\[
|\mathbf{C}_K| \le C h_K M_K .
\]
\end{lemma}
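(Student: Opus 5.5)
The plan is to reduce the bound to a lower bound on the smallest eigenvalue of the $2\times 2$ Gram matrix $G_K:=N_K^T N_K$. Because the rows of $N_K$ are unit vectors, $N_K$ has rank two exactly when at least two of the normals are linearly independent. For a nondegenerate triangle this always holds, so the least-squares problem has the unique minimizer
\[
\mathbf{C}_K = G_K^{-1}N_K^T\mathbf{b}_K .
\]
From this formula,
\[
|\mathbf{C}_K| \le \|G_K^{-1}\|\,\|N_K^T\|\,|\mathbf{b}_K| = \sigma_{\min}(N_K)^{-1}|\mathbf{b}_K|,
\]
since $\|G_K^{-1}N_K^T\| = \|N_K^\dagger\| = 1/\sigma_{\min}(N_K)$. (Alternatively, one can bound $\|N_K^T\|\le\sqrt3$ directly and control $\|G_K^{-1}\|$.) So everything reduces to showing $\sigma_{\min}(N_K)\ge c>0$, with $c$ depending only on shape regularity.

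For that, take any unit vector $\xi\in\mathbb{R}^2$. Then
\[
|N_K\xi|^2=\sum_{e}(\mathbf{n}_{K,e}\cdot\xi)^2,
\]
and we need a uniform positive lower bound. The three outward normals of a triangle make pairwise angles equal to $\pi$ minus the interior angles. Shape regularity gives $\theta_{\min}\le\theta_K\le\pi-2\theta_{\min}$ for every interior angle, with $\theta_{\min}>0$ fixed. It follows that any two normals are separated by an angle in $[2\theta_{\min},\pi-\theta_{\min}]$.

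Now argue by cases on $\xi$. If $|\mathbf{n}_{K,e_1}\cdot\xi|<\sin\theta_{\min}$, then $\xi$ is within angle $\theta_{\min}$ of being orthogonal to $\mathbf{n}_{K,e_1}$. Because $\mathbf{n}_{K,e_2}$ is separated from $\mathbf{n}_{K,e_1}$ by at least $2\theta_{\min}$, and by at most $\pi-\theta_{\min}$, one gets $|\mathbf{n}_{K,e_2}\cdot\xi|\ge\sin\theta_{\min}$ up to a harmless factor. Hence $|N_K\xi|^2\ge c(\theta_{\min})^2$ in either case.

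A cleaner alternative is a compactness argument. The map $(\mathbf{n}_1,\mathbf{n}_2,\mathbf{n}_3)\mapsto\sigma_{\min}(N)$ is continuous and positive on the compact set of normal triples arising from triangles with angles bounded below by $\theta_{\min}$; this set is compact because the configuration space of angles is closed and bounded. The minimum is therefore a positive constant, and it is scale-invariant, since normals do not depend on $h_K$. I expect this uniform lower bound on $\sigma_{\min}(N_K)$ to be the only real step; I would present the compactness argument, with the explicit angle estimate as a remark.

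The consequence then follows immediately: if $|\mathbf{b}_K|\le C h_K M_K$, then $|\mathbf{C}_K|\le \sigma_{\min}(N_K)^{-1}C h_K M_K\le C' h_K M_K$. No dependence on $h_K$ enters the constant, because $N_K$ is invariant under dilation of $K$.
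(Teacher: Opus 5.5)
Your proposal is correct and follows the same route as the paper: write $\mathbf{C}_K=(N_K^TN_K)^{-1}N_K^T\mathbf{b}_K$ and use a uniform, scale-invariant lower bound on the smallest eigenvalue of $N_K^TN_K$. The paper only asserts that bound for shape-regular triangles, whereas your compactness argument actually proves it. One caution about your explicit angle remark: when $|\mathbf{n}_{K,e_1}\cdot\xi|<\sin\theta_{\min}$, the quantity $|\mathbf{n}_{K,e_2}\cdot\xi|$ can be arbitrarily small (take the interior angle at the vertex shared by $e_1,e_2$ equal to $\theta_{\min}$ and $\xi$ a slight rotation of a unit vector orthogonal to $\mathbf{n}_{K,e_2}$), so that step is false as stated; a clean replacement is $\det(N_K^TN_K)=\sin^2\alpha+\sin^2\beta+\sin^2\gamma\ge 3\sin^2\theta_{\min}$ (by Cauchy--Binet) together with $\mathrm{tr}(N_K^TN_K)=3$, which gives $\sigma_{\min}(N_K)\ge\sin\theta_{\min}$.
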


\begin{proof}
Since \(K\) is nondegenerate, the outward unit normals
\(\mathbf{n}_{K,e_1},\mathbf{n}_{K,e_2},\mathbf{n}_{K,e_3}\) span \(\mathbb{R}^2\). Hence
\(N_K^T N_K\) is positive definite, and
\[
\mathbf{C}_K=(N_K^T N_K)^{-1}N_K^T \mathbf{b}_K .
\]
For a shape-regular family of triangles, the smallest eigenvalue of
\(N_K^T N_K\) is bounded below by a positive constant independent of \(h_K\).
Therefore
\[
|\mathbf{C}_K|
\le
\|(N_K^T N_K)^{-1}N_K^T\|\, |\mathbf{b}_K|
\le C |\mathbf{b}_K|.
\]
The final estimate follows immediately from the assumed bound on \(\mathbf{b}_K\).
\end{proof}

\bibliographystyle{plain}
\bibliography{references}
\end{document}